\newtheorem{theorem}{Theorem}[section]
\newtheorem*{theorem A}{Theorem A}
\newtheorem*{theorem B}{N\"olker's Theorem}
\theoremstyle{remark}
\theoremstyle{remark}
\theoremstyle{definition}
\numberwithin{equation}{section}
\def\({\left ( }
\def\){\right )}
\def\<{\left < }
\def\>{\right >}
\begin{document}
\title[Constant curvature translation surfaces in Galilean 3-space] {Constant curvature translation surfaces in Galilean 3-space}
\author{Alper Osman Ogrenmis$^1$, Mihriban Kulahci $^2$, Muhittin Evren Aydin$^3$}
\address{$^{1,2,3}$ Department of Mathematics, Faculty of Science, Firat University, Elazig, 23200, Turkey}
\email{aogrenmis@firat.edu.tr, mihribankulahci@gmail.com, meaydin@firat.edu.tr}
\thanks{}
\subjclass[2000]{53A35, 53B25, 53C42.}
\keywords{Galilean space, translation surface, Gaussian curvature,
mean curvature.}

\begin{abstract}
Total five different types of translation surfaces, based upon \linebreak planarity of translating curves and the absolute figure, arise
in a Galilean 3-space. Excepting
the type in which both of translating curves are non-planar we obtain these
surfaces with arbitrary constant Gaussian and mean curvature.
\end{abstract}

\maketitle

\section{Introduction and Preliminaries}

The \textit{translation surfaces,} among the family of surfaces in classic
differential \linebreak geometry, have been commonly examined since early 1900s and for
that reason an extensive literature relating to these appears. For example
see \cite{3,5,6}, \cite{12}-\cite{18}, \cite{24}-\cite{26}, \cite{31}-\cite%
{36}. Such surfaces are geometrically described as translating two curves along each other up
to isometries of the ambient space. As far as we know the counterparts of
this notion in a Galilean space $\mathbb{G}_{3}$ were firstly considered in
\linebreak  Sipus and Divjak's work \cite{20} by providing translation surfaces with
constant \linebreak  Gaussian $\left( K\right) $ and mean curvature $\left( H\right) $
under the restriction that the translating curves lie in orthogonal planes.
Extending this restriction, which is our motivation for the present study, leads us to open fields for further
investigations. More \linebreak precisely, by assuming $K=const.$ and $H=const.$ we
shall present the translation surfaces in $\mathbb{G}_{3},$ except the ones
whose both of translating curves are space curves.

A \textit{Cayley-Klein 3-space} is defined as a projective 3-space $P_{3}\left( 
\mathbb{R}\right) $ with certain absolute figure. \textit{Group of motions} of this space are introduced by the projective
transformations which leave invariant the absolute figure. Metrically
arguments given up to the absolute figure are invariant under this
group (cf. \cite{23}). The Galilean 3-space $\mathbb{G}_{3}$ is one of real Cayley-Klein 3-spaces with the
{\it absolute figure} $\left\{ \Gamma ,l,\iota \right\} ,$ where $\Gamma $ is a plane ({\it absolute plane}) in $P_{3}\left( \mathbb{R}\right) $, $l$ a line ({\it absolute line}) in $\Gamma $ and $\iota $ is the {\it fixed elliptic involution} of the
points of $l$. For technical details, we refer the reader to \cite{1,2,4},  
\cite{7}-\cite{10}, \cite{19,21,22} \cite{27}-\cite{30}, \cite{37}. Let $\left(
x_{0}:x_{1}:x_{2}:x_{3}\right) $ denote the homogeneous coordinates in $%
P_{3}\left( \mathbb{R}\right) .$ Then $\Gamma $ is characterized by $x_{0}=0,$ $l$ by $x_{0}=x_{1}=0$ and $\iota $ by 
\begin{equation*}
\left( x_{0}:x_{1}:x_{2}:x_{3}\right)
\longmapsto \left( x_{0}:x_{1}:x_{3}:-x_{2}\right) .
\end{equation*}
Passing from the homogeneous coordinates to the affine coordinates is essential to 
introduce the affine model of $\mathbb{G}_{3}$ that is our interest field.
Then, by means of the affine coordinates, the \textit{group of motions} of $\mathbb{G}_{3}$ is
given by the transformation

\begin{equation*}
\left( x,y,z\right) \longmapsto \left( x^{\prime },y^{\prime },z^{\prime
}\right) :\left\{ 
\begin{array}{l}
x^{\prime }=a+x, \\ 
y^{\prime }=b+cx+\left( \cos \theta \right) y+\left( \sin \theta \right) z,
\\ 
z^{\prime }=d+ex-\left( \sin \theta \right) y+\left( \cos \theta \right) z,%
\end{array}%
\right.
\end{equation*}%
where $a,b,c,d,e$ and $\theta $ are some constants. For given points $%
X=\left( x_{1},x_{2},x_{3}\right) $ and $Y=\left( y_{1},y_{2},y_{3}\right) ,$
the \textit{Galilean distance} is introduced by the absolute figure, namely%
\begin{equation*}
d\left( X,Y\right) =\left\{ 
\begin{array}{ll}
\left\vert y_{1}-x_{1}\right\vert , & \text{if }x_{1}\neq 0\text{ or }%
y_{1}\neq 0, \\ 
\sqrt{\left( y_{2}-x_{2}\right) ^{2}+\left( y_{3}-x_{3}\right) ^{2}}, & 
\text{if }x_{1}=0\text{ and }y_{1}=0.%
\end{array}%
\right.
\end{equation*}

The lines and planes are categorized up to the absolute figure. Explicitly,
a line is said to be \textit{non-isotropic} (resp. \textit{isotropic}) if
its intersection with the absolute line $l$ is empty (resp. non-empty).
Contrary to this, a plane is said to be \textit{isotropic} if it
does not involve $l$, otherwise it is said to be\textit{\ Euclidean}. In
other words, an isotropic plane does not involve any isotropic direction. In
the affine model of $\mathbb{G}_{3}$, the Euclidean planes are determined by the equation $%
x=const.$ Accordingly, a vector is called \textit{isotropic} if it is
involved in the Euclidean plane $x=0$. \textit{Non-isotropic vectors} are of
the form $\left( a\neq 0,b,c\right) .$

A curve given in parametric form $\alpha =\alpha (s)=(x\left( s\right)
,y(s),z(s))$ is said to be \textit{non-isotropic} (or \textit{admissible})
if nowhere its tangent vector is isotropic, namely $x^{\prime }\left(
s\right) =\frac{dx}{ds}\neq 0$. Otherwise the curve $\alpha $ is said to be 
\textit{isotropic}. If $\alpha $ is a non-isotropic curve having unit speed
(i.e. $x^{\prime }\left( s\right) =\pm 1$), then the \textit{curvature} and 
\textit{torsion} are given by%
\begin{equation*}
\kappa (s)=\sqrt{\left[ y^{\prime \prime }(s)\right] ^{2}+\left[ z^{\prime
\prime }(s)\right] ^{2}},\text{ \ \ }\tau (s)=\frac{\det \left (\alpha ^{\prime
}(s),\alpha ^{\prime \prime }(s),\alpha ^{\prime \prime \prime }(s) \right )}{\left[
\kappa \left( s\right) \right] ^{2}}\text{ }\left( \kappa \left( s\right)
\neq 0\right) .
\end{equation*}%
We call a curve \textit{planar} (resp. \textit{space curve}) provided $\tau
(s)=0$ $\left( \text{resp. }\tau (s)\neq 0\right) $ for all $s.$ Obviously,
the space curves are non-isotropic, whereas the isotropic curves are Euclidean
planar, that is, lie in a Euclidean plane.

A regular surface immersed in $\mathbb{G}_{3}$ is parameterized by the
mapping%
\begin{equation*}
r:D\subseteq \mathbb{R}^{2}\longrightarrow \mathbb{G}_{3},\text{ }\left(
u_{1},u_{2}\right) \longmapsto \left( x\left( u_{1},u_{2}\right) ,y\left(
u_{1},u_{2}\right) ,z\left( u_{1},u_{2}\right) \right) .
\end{equation*}%
In order to specify the partial derivatives we shall notate: 
\begin{equation*}
x_{,i}=\frac{\partial x}{\partial u_{i}}\text{ and }x_{,ij}=\frac{\partial
^{2}x}{\partial u_{i}\partial u_{j}},\text{ }1\leq i,j\leq 2.
\end{equation*}%
Then $r$ is said to satisfy \textit{admissibility} criteria if nowhere it
has Euclidean tangent planes, i.e., $x_{,i}\neq 0$ for some $i=1,2.$ The
first fundamental form is given by%
\begin{equation*}
ds^{2}=\left( g_{1}du_{1}+g_{2}du_{2}\right) ^{2}+\varepsilon \left(
h_{11}du_{1}^{2}+2h_{12}du_{1}du_{2}+h_{22}du_{2}^{2}\right) ,
\end{equation*}%
where $g_{i}=x_{,i}$, $h_{ij}=y_{,i}y_{,j}+z_{,i}z_{,j},$ $i,j=1,2,$ and 
\begin{equation*}
\varepsilon =\left\{ 
\begin{array}{ll}
0, & \text{if the direction }du_{1}:du_{2}\text{ is non-isotropic,} \\ 
1, & \text{if the direction }du_{1}:du_{2}\text{ is isotropic.}%
\end{array}%
\right.
\end{equation*}

Let us introduce a function $W$ given by%
\begin{equation*}
W=\sqrt{\left( x_{,1}z_{,2}-x_{,2}z_{,1}\right) ^{2}+\left(
x_{,2}y_{,1}-x_{,1}y_{,2}\right) ^{2}}.
\end{equation*}%
Then the normal vector field is defined as%
\begin{equation*}
N=\frac{1}{W}\left(
0,-x_{,1}z_{,2}+x_{,2}z_{,1},x_{,1}y_{,2}-x_{,2}y_{,1}\right)
\end{equation*}%
and thereafter the second fundamental form%
\begin{equation*}
II=L_{11}du_{1}^{2}+2L_{12}du_{1}du_{2}+L_{22}du_{2}^{2},
\end{equation*}%
where%
\begin{equation*}
L_{ij}=\frac{1}{g_{1}}\left( g_{1}\left( 0,y_{,ij},z_{,ij}\right)
-g_{i,j}\left( 0,y_{,1},z_{,1}\right) \right) \cdot N,\text{ }g_{1}\neq 0
\end{equation*}%
or%
\begin{equation*}
L_{ij}=\frac{1}{g_{2}}\left( g_{2}\left( 0,y_{,ij},z_{,ij}\right)
-g_{i,j}\left( 0,y_{,2},z_{,2}\right) \right) \cdot N,\text{ }g_{2}\neq 0.
\end{equation*}%
Note that the dot $"\cdot "$ denotes the \textit{Euclidean scalar product}.
Thereby, the \textit{Gaussian }and \textit{mean curvature} are defined as%
\begin{equation*}
K=\frac{L_{11}L_{22}-L_{12}^{2}}{W^{2}}\text{ and }H=\frac{%
g_{2}^{2}L_{11}-2g_{1}g_{2}L_{12}+g_{1}^{2}L_{22}}{2W^{2}}.
\end{equation*}%
A surface is said to be \textit{minimal} (resp. \textit{flat}) if its mean
(resp. Gaussian) curvature vanishes. Recall that the minimal surfaces in $\mathbb{G}_{3}$ were
classified in \cite{29} by the result:

\begin{theorem} 
Minimal surfaces in $\mathbb{G}_{3}$ are cones whose vertices lie on the absolute line and the ruled surfaces of type C. They are all conoidal ruled surfaces having the absolute line as the directional line in infinity.
\end{theorem}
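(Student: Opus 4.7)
The plan is to compute the mean curvature of a sufficiently general admissible parametrization, read off the condition $H=0$ as a PDE, interpret it geometrically as a ruling by isotropic lines, and then split that class into the two subfamilies named in the statement.

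First I would exploit admissibility to reduce the parametrization to a convenient form. Since $x_{,i}\neq 0$ for some $i$, after relabeling we may assume $x_{,1}\neq 0$; using the change of parameters $(u_1,u_2)\mapsto (x,u_2)$ I can rewrite the surface as $r(u,v)=(u,y(u,v),z(u,v))$. With this parametrization $g_1=1$, $g_2=0$, and
\begin{equation*}
W=\sqrt{y_{,2}^{2}+z_{,2}^{2}},\qquad N=\frac{1}{W}\left(0,-z_{,2},y_{,2}\right).
\end{equation*}
Only $L_{22}$ contributes to $H$ because $g_2=0$ and $g_1=1$, so the mean-curvature formula collapses to $H=L_{22}/(2W^{2})$. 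A direct computation using $g_{2,2}=x_{,22}=0$ gives $L_{22}=(y_{,2}z_{,22}-z_{,2}y_{,22})/W$, and hence
\begin{equation*}
H=0\iff y_{,2}z_{,22}-z_{,2}y_{,22}=0.
\end{equation*}

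The next step is to read this as an ODE in $v$ for each fixed $u$: it is precisely the condition that the planar curve $v\mapsto(y(u,v),z(u,v))$ in the Euclidean plane $x=u$ has vanishing Euclidean curvature, i.e.\ is a straight line. After a reparametrization in $v$ the surface therefore takes the ruled form $r(u,v)=\alpha(u)+v\,\beta(u)$ with $\alpha(u)=(u,a(u),c(u))$ and $\beta(u)=(0,b(u),d(u))$. Since each ruling lies in a Euclidean plane $x=\mathrm{const}$, its direction $\beta(u)$ is isotropic, and distinct rulings lie in distinct Euclidean planes; by the Galilean classification of ruled surfaces this is exactly the statement that the surface is \emph{conoidal} with the absolute line $\ell$ as directrix at infinity.

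Finally I would separate the resulting family into the two subclasses in the statement. If the vectors $\beta(u)$ are all parallel, every ruling passes through a common point at infinity on $\ell$ and the surface is a cone with vertex on the absolute line; if not, the surface is a proper conoidal ruled surface with rulings of varying isotropic direction, which is precisely the family called \emph{type C} in the Galilean ruled-surface dictionary. The remaining point is to dispose of the non-admissible locus, where the graph reduction breaks down; here one checks that a surface whose tangent planes are everywhere Euclidean is itself a Euclidean plane (a degenerate cone with vertex on $\ell$) and so fits into the listed families. I expect the main obstacle to be not the PDE analysis, which is essentially an ODE in $v$, but the bookkeeping in this last step: matching the analytic normal forms $r(u,v)=\alpha(u)+v\,\beta(u)$ to the geometric descriptions \emph{cone with vertex on $\ell$} and \emph{ruled surface of type C}, and ensuring that the passage between admissible charts covers the surface globally up to the degenerate cone case.
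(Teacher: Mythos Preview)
The paper does not prove this theorem: it is quoted from R\"oschel \cite{29} as a known classification and is stated without argument, so there is no proof in the paper against which to compare your proposal.

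That said, your outline is essentially correct. The reduction to $r(u,v)=(u,y(u,v),z(u,v))$ and the computation
\[
H=0\ \Longleftrightarrow\ y_{,2}z_{,22}-z_{,2}y_{,22}=0
\]
are right, and reading this Wronskian condition as ``each $v$-curve is a straight line in the Euclidean plane $x=u$'' is exactly the geometric content: the surface is ruled by isotropic lines, hence conoidal with $\ell$ as directrix at infinity. The only place your write-up is thin is the final dichotomy. From $r(u,v)=(u,a(u)+vb(u),c(u)+vd(u))$, the parallel case (fixed direction of $\beta$) is indeed, projectively, a cone whose vertex is the corresponding ideal point on $\ell$. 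For the non-parallel case you assert the surface is of type~C, i.e.\ admits the normal form $(u,x(u)+vy(u),vz(u))$; this needs one more line: where $d(u)\neq 0$ replace $v$ by $v+c(u)/d(u)$, so that the base curve slides along each ruling into the isotropic plane $z=0$, giving $x=a-cb/d$, $y=b$, $z=d$. You should also say what happens when $d\equiv 0$ (a rotation in the $yz$-plane sends this back to the parallel case) and at isolated zeros of $d$. None of this is hard, but it is the bookkeeping you yourself flagged as the likely obstacle, and it should be made explicit.
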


Recall that a ruled surface of type C is of the form $r(u,v)= \left (u,x(u)+vy(u),vz(u) \right )$.
\section{Translation Surfaces}

A \textit{translation surface} in $\mathbb{G}_{3}$  is
locally parameterized by 
\begin{equation*}
r:I_{1}\times I_{2}\subseteq \mathbb{R}^{2}\longrightarrow \mathbb{G}_{3},%
\text{ }r\left( x,y\right) =\alpha \left( x\right) +\beta \left( y\right) ,
\end{equation*}
where $\alpha $ and $\beta $ denote \textit{translating curves. }Under the
condition that $\alpha $ and $\beta $ are planar, the authors in \cite{20}
categorized such a surface up to the absolute figure:

\begin{description}
\item[type 1] $\alpha $ is planar non-isotropic curve and $\beta $ isotropic
curve,

\item[type 2] $\alpha $ and $\beta $ are planar non-isotropic curves.
\end{description}
If the planes involving translating curves are chosen to be mutually
orthogonal, the surfaces of type 1 and type 2 have the parametrizations,
respectively%
\begin{equation}
r\left( x,y\right) =\left( x,y,f\left( x\right) +g\left( y\right) \right) 
\text{ \ and\  \ }r\left( x,y\right) =\left( x+y,g\left( y\right) ,f\left(
x\right) \right) .  \tag{2.1}
\end{equation}%
These surfaces with $K=const.$ and $H=const.$ were obtained in \cite{20}. If
not, i.e. the planes are non-orthogonal, then the notion of \textit{affine
translation surface} naturally arises, that firstly introduced by Liu and Yu 
\cite{14} as the graph surfaces of the functions%
\begin{equation*}
z\left( x,y\right) =f\left( x\right) +g\left( y+ax\right) ,\text{ }a\neq 0.
\end{equation*}%
By following this, the surfaces of type 1 and type 2 are generally called 
\textit{affine translation surfaces.} We shall classify such surfaces in
Section 3 with $K=const.$ and $H=const.$ Furthermore, the translating curves
could be non-planar and hereinafter it is necessary to extend above
categorization:

\begin{description}
\item[type 3] $\alpha $ is isotropic curve and $\beta $ space curve,

\item[type 4] $\alpha $ is planar non-isotropic curve and $\beta $ space
curve,

\item[type 5] $\alpha $ and $\beta $ are space curves.
\end{description}
We shall also provide the surfaces of type 3 and type 4 in next sections
with $K=const.$ and $H=const.$ 

\section{Constant Curvature Affine Translation Surfaces}

Assume that $A=(a_{ij})$ is a regular real matrix, $i,j=1,2,$ and $w=\det
A\neq 0$. Let us consider the following planar curves:%
\begin{equation}
\left\{ 
\begin{array}{cc}
\alpha=\alpha (u)=\left( \dfrac{a_{22}}{w}u,\dfrac{-a_{21}}{w}u,f(u)\right) , & 
P_{\alpha }:a_{21}x+a_{22}y=0, \\ 
\beta =\beta (v)=\left( \dfrac{-a_{12}}{w}v,\dfrac{a_{11}}{w}v,g(v)\right) , & 
P_{\beta }:a_{11}x+a_{12}y=0,%
\end{array}%
\text{ }\right.  \tag{3.1}
\end{equation}%
where $P_{\alpha }$ and $P_{\beta }$ denotes the planes involving the
curves. It is easily seen that $P_{\alpha }$ is orthogonal to $P_{\beta }$
provided $A$ is an orthogonal matrix. If $a_{12}=0$ (resp. $a_{22}=0$) in
(3.1) then $\beta $ (resp. $\alpha $) becomes an isotropic curve. Otherwise
both of them are non-isotropic curves. Therefore, by a translation of $%
\alpha $ and $\beta ,$ we derive the following admissible surface 
\begin{equation}
r(u,v)=\left( \frac{a_{22}}{w}u-\frac{a_{12}}{w}v,\frac{a_{11}}{w}v-\frac{%
a_{21}}{w}u,f(u)+g(v)\right) .  \tag{3.2}
\end{equation}%
By changing the coordinates $u=a_{11}x+a_{12}y,$ \ $v=a_{21}x+a_{22}y,$
(3.2) turns to the standart parametrization of \textit{affine translation surface}
given by%
\begin{equation}
r(x,y)=\left( x,y,f(a_{11}x+a_{12}y)+g(a_{21}x+a_{22}y)\right) .  \tag{3.3}
\end{equation}%
This one represents the surfaces of both type 1 and type 2 as well as a
natural generalization of the surfaces given by (2.1). Throughout this
section, we shall only distinguish the cases relating to $f$ due to the fact
that the roles of $f$ and $g$ are symmetric. After a calculation, we have
the Gaussian curvature:%
\begin{equation}
K=\frac{w^{2}f^{\prime \prime }g^{\prime \prime }}{\left[ 1+\left(
a_{12}f^{\prime }+a_{22}g^{\prime }\right) ^{2}\right] },  \tag{3.4}
\end{equation}%
where $f^{\prime }=\frac{df}{du}$ and $g^{\prime }=\frac{dg}{dv},$ etc.

\begin{theorem} If an affine translation surface given by (3.3) has
constant Gaussian curvature $K_{0}$ in $\mathbb{G}_{3}$, then it is either

\begin{enumerate}
\item[(1)] a generalized cylinder with isotropic or non-isotropic rulings ($K_{0}=0$);
\item[(2)]  or a certain surface parameterized by, up to suitable
translations and constants, 
\begin{equation*}
r(x,s)=\left( x,c_{1}x+\frac{K_{0}}{c_{2}}s^{2},c_{3}x^{2}+%
\frac{1}{2}s\sqrt{1-\frac{K_{0}}{c_{2}}s^{2}}+\sqrt{%
\frac{c_{2}}{16K_{0}}}\arcsin \left( \sqrt{\frac{4K_{0}}{c_{2}}}s\right)
\right) ,
\end{equation*}%
where $c_{1},c_{2},c_{3}\in \mathbb{R}-\left\{ 0\right\} $\ and $s$ is the arc-length parameter of $\beta$.
\end{enumerate}
\end{theorem}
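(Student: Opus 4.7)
The plan is to specialize equation (3.4) to $K\equiv K_0$; reading the denominator of (3.4) as a square (as is forced by a direct computation of $W$ and $L_{ij}$ for the graph parametrization (3.3)), this becomes
\begin{equation*}
K_0\bigl[1+\phi(u,v)^2\bigr]^2 \;=\; w^2\,f''(u)\,g''(v), \qquad \phi(u,v):=a_{12}f'(u)+a_{22}g'(v).
\end{equation*}
When $K_0=0$, one deduces $f''\equiv 0$ or $g''\equiv 0$; taking $f''\equiv 0$, the curve $\alpha$ given in (3.1) becomes a straight line with direction $(a_{22}/w,-a_{21}/w,f'_0)$, so $r(u,v)=\alpha(u)+\beta(v)$ is a generalized cylinder whose rulings are parallel to $\alpha$; the ruling is isotropic iff $a_{22}=0$ and non-isotropic otherwise. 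This settles case~(1).

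Suppose now $K_0\ne 0$, so neither $f''$ nor $g''$ vanishes. A logarithmic differentiation of the main equation in $u$ gives
\[
\frac{f'''(u)}{f''(u)^{2}} \;=\; \frac{4\,a_{12}\,\phi(u,v)}{1+\phi(u,v)^{2}}.
\]
The left side depends on $u$ alone; the right side, as a function of $v$ at fixed $u$, equals $t\mapsto 4a_{12}(h+t)/(1+(h+t)^2)$ with $t=a_{22}g'(v)$. Since $g''\ne 0$, $t$ sweeps an interval, and this rational function is non-constant on any interval whenever $a_{12}\ne 0$ (its derivative vanishes only at $h+t=\pm 1$). Hence $a_{12}=0$; by symmetry (swapping the roles of $\alpha$ and $\beta$) the alternative is $a_{22}=0$. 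Thus one translating curve must be isotropic, i.e.\ the surface is of type~1, and no type~2 affine translation surface admits $K\equiv K_0\ne 0$. Substituting $a_{12}=0$ back and differentiating in $u$ yields $f'''g''\equiv 0$, hence $f'''\equiv 0$; absorbing lower-order terms by Galilean translations gives $f(u)=\tfrac{a}{2}u^{2}$ with $a\ne 0$, and the surviving ODE for $G:=g'$ is separable,
\[
\frac{G'}{[1+a_{22}^{2}G^{2}]^{2}} \;=\; \frac{K_0}{w^{2}a}.
\]

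To obtain the explicit form, I reparametrize $\alpha$ by its first coordinate, obtaining $\alpha(x)=(x,c_1 x,c_3 x^{2})$ up to translation, and $\beta$ by its arc length $s$, which (since $\beta$ lies in the absolute plane $x=0$) coincides with the Euclidean arc length in that plane. Writing the unit tangent of $\beta$ as $(\cos\theta(s),\sin\theta(s))$ and using $\mathrm{d}v/\mathrm{d}s=a_{22}\cos\theta$, the separated ODE collapses to $\sin\theta(s)=\kappa s$ (after a translation of $s$), where $\kappa$ is a nonzero constant proportional to $K_0$. Integrating $y'(s)=\cos\theta=\sqrt{1-\kappa^{2}s^{2}}$ and $z'(s)=\sin\theta=\kappa s$ produces the closed forms $\tfrac{1}{2}s\sqrt{1-\kappa^{2}s^{2}}+\tfrac{1}{2\kappa}\arcsin(\kappa s)$ and $\kappa s^{2}/2$; after a $\pi/2$ rotation in the $(y,z)$-plane (a Galilean isometry) and renaming the constants as $c_1,c_2,c_3$, these match the expressions displayed in (2).

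The main obstacle is the type-2 exclusion in the second paragraph: one has to check rigorously that the logarithmic-derivative identity is incompatible with $a_{12},a_{22}$ both nonzero, which rests on the regularity afforded by $K_0\ne 0$ (so $g''$ is nowhere zero, letting $a_{22}g'(v)$ genuinely sweep an interval) together with the non-constancy of the rational function $t\mapsto 4a_{12}t/(1+t^{2})$ on any interval. The remainder is routine separable integration and Galilean arc-length reparametrization.
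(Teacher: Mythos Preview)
Your proof is correct and follows the same overall strategy as the paper: differentiate the constant-curvature equation, show that the case $a_{12}a_{22}\neq 0$ is impossible, then integrate explicitly when one of the curves is isotropic. You also correctly note that the denominator in (3.4) must carry an exponent $2$ (which the displayed formula omits but the paper's subsequent computations tacitly use).

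The one genuine difference lies in how the type~2 case is excluded. The paper rewrites the first derivative identity as (3.9), differentiates again in $v$ to obtain (3.10), and then once more in $u$ to reach the polynomial contradiction $1+3\phi^{4}=0$. You instead stop after one differentiation: the identity $f'''/(f'')^{2}=4a_{12}\phi/(1+\phi^{2})$ equates a function of $u$ alone to the composition of the non-constant rational function $s\mapsto 4a_{12}s/(1+s^{2})$ with $\phi$, and since $g''\neq 0$ forces $\phi$ to sweep a genuine interval in $v$, this is already impossible when $a_{12}a_{22}\neq 0$. This is a cleaner and more conceptual argument than the paper's iterated differentiation, at the small cost of needing the observation that $g''$ is nowhere zero (which you supply via $K_0\neq 0$). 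The explicit integration via the angle parametrization $(\cos\theta(s),\sin\theta(s))$ of $\beta'$ is essentially equivalent to the paper's arc-length computation (3.7)--(3.8), just carried out in slightly more detail.
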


\begin{proof} Assume that $K_{0}=0.$ Then (3.4) leads $f$ to be a
linear function and thus the surface becomes a generalized cylinder
(so-called cylindrical surface, see \cite{11}, p. 439). Otherwise, i.e. $%
K_{0}\neq 0,$ by (3.4) we get $f^{\prime \prime }g^{\prime \prime }\neq 0.$
Taking the partial derivative of (3.4) with respect to $u$ gives%
\begin{equation}
4K_{0}[1+\left( a_{12}f^{\prime }+a_{22}g^{\prime }\right) ^{2}]\left[
a_{12}f^{\prime }+a_{22}g^{\prime }\right] [a_{12}f^{\prime \prime
}]=w^{2}f^{\prime \prime \prime }g^{\prime \prime }.  \tag{3.5}
\end{equation}%
To solve (3.5), we have two cases:

\begin{itemize}
\item[\textbf{Case (A)}] $a_{12}=0.$ (3.5) follows that $f^{\prime \prime
}=c_{1}\neq 0.$ Then by (3.4) we get%
\begin{equation}
\frac{K_{0}}{a_{11}^{2}a_{22}}=\frac{a_{22}g^{\prime \prime }}{\left[
1+(a_{22}g^{\prime })^{2}\right] ^{2}},  \tag{3.6}
\end{equation}%
where $a_{11}a_{22}\neq 0$ since $w\neq 0.$ We treat the method used in \cite%
{11} in order to solve (3.6). Since $a_{12}=0,$ $\beta $ is an isotropic curve and its reparametrization
having unit speed is given by 
\begin{equation}
\beta (s)=(0,p(s),q(s)),\text{ }\left( p^{\prime }\right) ^{2}+\left(
q^{\prime }\right) ^{2}=1,  \tag{3.7}
\end{equation}%
where the prime denotes the derivative with respect to the arc-length parameter. In this case (3.4) turns to 
\begin{equation}
K_{0}=f^{\prime \prime }q^{\prime \prime }.  \tag{3.8}
\end{equation}%
After solving (3.8), up to suitable translations and constants, we deduce $q=%
\frac{K_{0}}{c_{1}}s^{2}.$ Considering it into (3.7) leads to%
\begin{equation*}
p(s)=\frac{1}{2}s\sqrt{1-\frac{K_{0}}{c_{1}}s^{2}}+\frac{1}{4}\sqrt{\frac{%
c_{1}}{K_{0}}}\arcsin \left( 2\sqrt{\frac{K_{0}}{c_{1}}}s\right) ,
\end{equation*}%
which proves the second statement of the theorem.

\item[\textbf{Case (B)}] $a_{12}\neq 0.$ By the symmetry we have $a_{22}\neq 0$ and
then (3.5) can be rewritten as%
\begin{equation}
\left[ 1+\left( a_{12}f^{\prime }+a_{22}g^{\prime }\right) ^{2}\right] \left(
a_{12}f^{\prime }+a_{22}g^{\prime }\right) (g^{\prime \prime })^{-1}=\frac{%
w^{2}f^{\prime \prime \prime }}{4K_{0}a_{12}f^{\prime \prime }}.  \tag{3.9}
\end{equation}%
The partial derivative of (3.9) with respect to $v$ yields%
\begin{equation}
\frac{1+3\left( a_{12}f^{\prime }+a_{22}g^{\prime }\right) ^{2}}{%
a_{12}f^{\prime }+a_{22}g^{\prime }+\left( a_{12}f^{\prime }+a_{22}g^{\prime
}\right) ^{3}}=\frac{g^{\prime \prime \prime }}{a_{22}(g^{\prime \prime
})^{2}}.  \tag{3.10}
\end{equation}%
Again taking the partial derivative of (3.10) with respect to $u$ gives the
following polynomial equation on $\left( a_{12}f^{\prime }+a_{22}g^{\prime
}\right):$ 
\begin{equation*}
1+3\left( a_{12}f^{\prime }+a_{22}g^{\prime }\right) ^{4}=0,
\end{equation*}%
which is a contradiction and completes the proof.
\end{itemize}
\end{proof} 

For the mean curvature, we have%
\begin{equation}
H=\frac{a_{12}^{2}f^{\prime \prime }+a_{22}^{2}g^{\prime \prime }}{\left[
1+\left( a_{12}f^{\prime }+a_{22}g^{\prime }\right) ^{2}\right] ^{\frac{3}{2}%
}}.  \tag{3.11}
\end{equation}

\begin{theorem} Let an affine translation surface given by (3.3) have
constant mean curvature $H_{0}$\ in $\mathbb{G}_{3}.$\ Then:

\begin{enumerate}
\item[(1)] If $H_{0}=0,$\ it is either

\begin{enumerate}
\item[(1.1)] an isotropic plane, or

\item[(1.2)] a generalized cylinder with isotropic rulings, or

\item[(1.3)] a non-cylindrical ruled surface of type C whose the base curve is a parabolic circle.
\end{enumerate}

\item[(2)] Otherwise ($H_{0}\neq 0$); it is either

\begin{enumerate}
\item[(2.1)] a certain surface given by%
\begin{equation*}
r(x,y)=\left( x,y,f(a_{11}x)-\frac{1}{H_{0}}\sqrt{1-\left( \frac{%
H_{0}}{a_{22}}v\right) ^{2}}\right)  , \text{ }a_{22}\neq 0,
\end{equation*}

\item[(2.2)] or a generalized cylinder with non-isotropic rulings given by%
\begin{equation*}
r(x,y)=\left( x,y,\frac{c_{1}w}{a_{22}}x-\frac{1}{H_{0}}\sqrt{1-\left( \frac{%
H_{0}}{a_{22}}v\right) ^{2}}\right) ,\text{ }a_{22}\neq 0,
\end{equation*}
\end{enumerate}
\end{enumerate}

where $v=a_{21}x+a_{22}y.$
\end{theorem}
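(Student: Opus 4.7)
My plan is to impose $H = H_0$ in (3.11), giving the PDE
\begin{equation*}
a_{12}^{2} f''(u) + a_{22}^{2} g''(v) = H_{0} \bigl[1+\bigl(a_{12}f'(u)+a_{22}g'(v)\bigr)^{2}\bigr]^{3/2},
\end{equation*}
and analyze $H_0 = 0$ and $H_0 \neq 0$ separately, exploiting the $f\leftrightarrow g$ symmetry (with the accompanying relabeling of indices) to assume $a_{22}\neq 0$ whenever convenient.

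\textbf{The case $H_0 = 0$.} Here the PDE reduces to $a_{12}^{2} f''(u) = -a_{22}^{2} g''(v)$, and separation of variables forces both sides to equal a common constant $k$. If $a_{12}=0$ (so $a_{22}\neq 0$ since $w\neq 0$), then $g$ is linear while $f$ is unconstrained, and (3.3) parametrizes a graph of the form $z = f(a_{11}x) + c\,a_{22}y + (\text{linear in }x)$, i.e.\ a generalized cylinder with isotropic rulings, giving (1.2). If $a_{12}a_{22}\neq 0$ and $k=0$, both $f$ and $g$ are linear and $z$ is affine in $(x,y)$, yielding the isotropic plane of (1.1). If $a_{12}a_{22}\neq 0$ and $k\neq 0$, both $f$ and $g$ are quadratic; after expanding (3.3) the relation $a_{12}^{2}f''+a_{22}^{2}g''=0$ annihilates the $y^{2}$-coefficient and leaves $z=Ax^{2}+Bxy+(\text{linear})$ with $B=kw/(a_{12}a_{22})\neq 0$. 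This is a $y$-ruled surface whose base $(x,0,Ax^{2}+\cdots)$ is a parabola in an isotropic plane; a Galilean rotation $(y,z)\mapsto(z,-y)$ converts it to type $C$ form over a parabolic circle, proving (1.3).

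\textbf{The case $H_0 \neq 0$.} I would differentiate the PDE in $u$, obtaining
\begin{equation*}
a_{12}^{2} f''' = 3H_{0}\bigl[1+(a_{12}f'+a_{22}g')^{2}\bigr]^{1/2}\bigl(a_{12}f'+a_{22}g'\bigr)(a_{12}f''),
\end{equation*}
and then in $v$. Since $\partial_{v}(a_{12}f'+a_{22}g') = a_{22}g''$, the right-hand side factors as $3H_{0}\,Q(a_{12}f'+a_{22}g')\,(a_{12}f'')(a_{22}g'')$ with $Q(t)=(1+2t^{2})/\sqrt{1+t^{2}}>0$. Thus either $a_{12}f''\equiv 0$ or $a_{22}g''\equiv 0$; by symmetry I take the former, so $a_{22}\neq 0$. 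If $a_{12}=0$, the PDE becomes $a_{22}^{2}g''=H_{0}[1+(a_{22}g')^{2}]^{3/2}$; with $p=a_{22}g'$ this integrates to $p/\sqrt{1+p^{2}}=H_{0}v/a_{22}$ (after a translation in $v$), and a second integration gives $g(v)=-H_{0}^{-1}\sqrt{1-(H_{0}v/a_{22})^{2}}$ with $f$ free, producing (2.1). If $a_{12}\neq 0$ and $f''=0$, then $f(u)=c_{1}u+c_{0}$, and the same ODE (with $a_{22}g'$ shifted by $a_{12}c_{1}$) integrates to $g$ plus a linear correction. Substituting into (3.3) and collecting terms, the $y$-linear contributions $c_{1}a_{12}y$ and $-a_{12}c_{1}y$ cancel and the $x$-linear terms combine, via $w = a_{11}a_{22}-a_{12}a_{21}$, into $c_{1}wx/a_{22}$; this recovers (2.2) exactly.

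\textbf{Anticipated obstacle.} The differentiation trick in the $H_0\neq 0$ case yields a clean factored identity rather than a polynomial contradiction (contrast the $K_0\neq 0$ analysis in Theorem 3.1), so nontrivial solutions do exist. The real care is required in the second subcase: one must verify that the apparent extra $y$-dependence coming from $f(a_{11}x+a_{12}y)$ cancels exactly, collapsing the surface into a genuine cylinder over the profile of (2.1). This cancellation, which relies on the identity $w=\det A$, is precisely what separates (2.2) from a spurious one-parameter perturbation family of (2.1).
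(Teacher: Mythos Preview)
Your argument is correct and follows essentially the same strategy as the paper: separation of variables for $H_{0}=0$, and successive differentiation of (3.11) in $u$ and then $v$ to force $a_{12}f''\,a_{22}g''=0$ when $H_{0}\neq 0$, followed by integrating the resulting ODE for $g$. The only cosmetic difference is that the paper first distinguishes $a_{12}=0$ versus $a_{12}\neq 0$ and, in the latter case with $f''\neq 0$, divides (3.14) by $f''$ before differentiating in $v$ to obtain the contradiction $1+2(a_{12}f'+a_{22}g')^{2}=0$; your route of differentiating (3.14) directly in $v$ to produce the factored identity $0=3H_{0}\,Q(t)\,(a_{12}f'')(a_{22}g'')$ is equivalent and arguably tidier, since it avoids the division and makes the case split $a_{12}f''=0$ versus $a_{22}g''=0$ appear naturally (just note in passing that the pointwise vanishing of the product upgrades to identical vanishing of one factor by the usual open-set argument).
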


\begin{proof} We divide the proof into two cases:

\begin{itemize}
\item[\textbf{Case (A)}] $H_{0}=0.$ Then (3.11) reduces to 
\begin{equation}
a_{12}^{2}f^{\prime \prime }+a_{22}^{2}g^{\prime \prime }=0.  \tag{3.12}
\end{equation}%
We have again cases:

\begin{itemize}
\item[\textbf{Case (A.i)}] $f^{\prime \prime }=0=g^{\prime \prime }$ is a
solution for (3.12). This leads the surface to be an isotropic plane, which
implies the statement (1.1) of the thorem.

\item[\textbf{Case (A.ii)}] $a_{12}=0.$ Since $w\neq 0,$ we get $a_{22}\neq
0 $. Thus (3.12) immediately implies $g^{\prime \prime }=0,$ which proves
the statement (2) of the theorem.

\item[\textbf{Case (A.iii)}] $a_{12}\neq 0.$ The symmetry implies $%
a_{22}\neq 0.$ Solving (3.12) gives, up to suitable translations and
constants, 
\begin{equation*}
f(u)=\frac{c_{1}}{2a_{12}^{2}}u^{2},\quad g(v)=-\frac{c_{1}}{2a_{22}^{2}}%
v^{2}.
\end{equation*}%
Substituting this into (3.3) gives%
\begin{equation*}
r(x,y)=\left( x,0,\frac{c_{1}}{2}\left[ \left( \frac{a_{11}}{a_{12}}\right)
^{2}-\left( \frac{a_{21}}{a_{22}}\right) ^{2}\right] x^{2}\right) +y\left(
0,1,2x\left[ \frac{a_{11}}{a_{12}}-\frac{a_{21}}{a_{22}}\right] \right) ,
\end{equation*}%
which parametrizes the non-cylindrical  ruled surface whose the base curve is a parabolic circle and
the rulings are isotropic.
\end{itemize}

\item[\textbf{Case (B)}] $H_{0}\neq 0.$ We have two cases:

\begin{itemize}
\item[\textbf{Case (B.i)}] $a_{12}=0.$ Then (3.11) reduces to%
\begin{equation}
H_{0}=\frac{a_{22}^{2}g^{\prime \prime }}{\left[ 1+\left( a_{22}g^{\prime
}\right) ^{2}\right] ^{\frac{3}{2}}}.  \tag{3.13}
\end{equation}%
After solving (3.13), up to suitable translations and constants, we deduce%
\begin{equation*}
g\left( a_{22}y\right) =-\frac{1}{H_{0}}\sqrt{1-\left(H_{0}y\right) ^{2}},
\end{equation*}%
where $a_{22}\neq 0$ since $w\neq 0.$ This proves the statement (2.1) of the
theorem.

\item[\textbf{Case (B.ii)}] $a_{12}\neq 0.$ Taking partial derivative of
(3.11) with respect to $u$ gives%
\begin{equation}
3H_{0}[1+\left( a_{12}f^{\prime }+a_{22}g^{\prime }\right) ^{2}]^{\frac{1}{2}%
}\left[ a_{12}f^{\prime }+a_{22}g^{\prime }\right] [a_{12}f^{\prime \prime
}]=a_{12}^{2}f^{\prime \prime \prime }.  \tag{3.14}
\end{equation}%
We have again two cases:

\begin{itemize}
\item[\textbf{Case (B.ii.1)}] $f^{\prime \prime }=0.$ Then from (3.11), we
have%
\begin{equation}
\frac{H_{0}}{a_{22}}=\frac{a_{22}g^{\prime \prime }}{[1+\left(
a_{12}c_{1}+a_{22}g^{\prime }\right) ^{2}]^{\frac{3}{2}}},  \tag{3.15}
\end{equation}%
where $f^{\prime }=c_{1}.$ By solving (3.15), up to suitable translations
and constants, we obtain 
\begin{equation*}
g\left( v\right) =-\frac{1}{H_{0}}\sqrt{1-\left( \frac{H_{0}}{a_{22}}%
v\right) ^{2}}-\frac{c_{1}a_{12}}{a_{22}}v,
\end{equation*}%
which gives the statement (2.2) of the theorem.

\item[\textbf{Case (B.ii.2)}] $f^{\prime \prime }\neq 0.$ Then (3.14) can be
rewritten as 
\begin{equation}
\lbrack 1+\left( a_{12}f^{\prime }+a_{22}g^{\prime }\right) ^{2}]^{\frac{1}{2%
}}\left[ a_{12}f^{\prime }+a_{22}g^{\prime }\right] =\frac{a_{12}f^{\prime
\prime \prime }}{3H_{0}f^{\prime \prime }}.  \tag{3.16}
\end{equation}%
The partial derivative of (3.16) with respect to $v$ gives%
\begin{equation*}
1+2\left( a_{12}f^{\prime }+a_{22}g^{\prime }\right) ^{2}=0,
\end{equation*}%
which is a contradiction. This completes the proof.
\end{itemize}
\end{itemize}
\end{itemize}
\end{proof}

\section{Constant Curvature Surfaces of Type 3}

Let one translating curve be the space curve given by $\alpha=\alpha
(u)=(u,f_{1}(u),f_{2}(u))$ and another one the unit speed isotropic curve by
\begin{equation*}
\left\{ 
\begin{array}{l}
\beta =\beta (v)=(0,g_{1}(v),g_{2}(v)), \\ 
\left( g_{1}^{\prime }\right) ^{2}+\left( g_{2}^{\prime }\right) ^{2}=1,%
\text{ }g_{i}^{\prime }=\frac{dg_{i}}{dv},\text{ }i=1,2,%
\end{array}%
\right.
\end{equation*}%
where we may assume $g_{1}^{\prime }\neq 0$ without loss of generality. The
last equality yields 
\begin{equation}
g_{1}^{\prime }g_{1}^{\prime \prime }+g_{2}^{\prime }g_{2}^{\prime \prime
}=0.  \tag{4.1}
\end{equation}%
Further, since the torsion of $\alpha $ is different from zero, we get%
\begin{equation}
f_{1}^{\prime \prime }f_{2}^{\prime \prime \prime }-f_{1}^{\prime \prime
\prime }f_{2}^{\prime \prime }\neq 0,  \tag{4.2}
\end{equation}%
where $\frac{df_{i}}{du}=f_{i}^{\prime },$ etc. $i=1,2$. Thereby the
obtained translation surface belongs to type 3 and is given by%
\begin{equation}
r(u,v)=(u,f_{1}(u)+g_{1}(v),f_{2}(u)+g_{2}(v)).  \tag{4.3}
\end{equation}%
By a calculation, the Gaussian curvature is 
\begin{equation}
K=-\frac{g_{2}^{\prime \prime }}{g_{1}^{\prime }}(f_{1}^{\prime \prime
}g_{2}^{\prime }-f_{2}^{\prime \prime }g_{1}^{\prime }).  \tag{4.4}
\end{equation}

\begin{theorem} If the surface given by (4.1) has constant
Gaussian curvature $K_{0}$ in $\mathbb{G}_{3}$, then it is a generalized
cylinder with isotropic rulings ($K_{0}=0$).
\end{theorem}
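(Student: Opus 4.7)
The plan is to set $K = K_0$ in (4.4), differentiate once with respect to $u$ to eliminate the constant, and then force $g_2''$ to vanish identically by exploiting the unit-speed constraint (4.1) on $\beta$ together with the nonzero-torsion hypothesis (4.2) on $\alpha$.

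Concretely, I would multiply (4.4) through by $g_1'$ (nonzero by assumption) and write
$$
-g_2''(v)\bigl(f_1''(u)\,g_2'(v) - f_2''(u)\,g_1'(v)\bigr) \;=\; K_0\,g_1'(v).
$$
Applying $\partial/\partial u$ to both sides kills the right-hand side, which depends only on $v$, producing the separated identity
$$
g_2''(v)\bigl(f_1'''(u)\,g_2'(v) - f_2'''(u)\,g_1'(v)\bigr) \;=\; 0 \quad\text{for all } (u,v).
$$

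The central step is to show that $g_2''$ must vanish everywhere. Suppose, for contradiction, that $g_2''(v) \neq 0$ on some open set $V \subseteq I_2$. On $V$ the functions $g_1'$ and $g_2'$ are linearly independent: indeed, any relation $\lambda g_1' + \mu g_2' \equiv 0$ with $(\lambda,\mu) \neq (0,0)$, combined with $(g_1')^2 + (g_2')^2 = 1$, would force $g_1'$ and $g_2'$ to be constant and hence $g_2'' = 0$ on $V$, contrary to assumption. Using this independence in the separated identity above (which holds for all $u \in I_1$ and $v \in V$) yields $f_1'''(u) = f_2'''(u) = 0$ for every $u$. But then the torsion numerator $f_1''f_2''' - f_1'''f_2''$ of $\alpha$ vanishes identically, contradicting (4.2).

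Therefore $g_2'' \equiv 0$; feeding this back into (4.4) gives $K_0 = 0$ automatically, and (4.1) then forces $g_1' g_1'' = 0$, hence $g_1'' = 0$. Both $g_i$ are affine in $v$, so $\beta$ is a straight line whose direction vector $(0,g_1',g_2')$ lies in the plane $x=0$ and is therefore isotropic. The surface (4.3) is consequently a generalized cylinder over the space curve $\alpha$ with isotropic rulings, as claimed. I expect the linear-independence step to be the main obstacle, since it is precisely the place where the unit-speed normalization of $\beta$ must be leveraged against the quadratic constraint $(g_1')^2+(g_2')^2=1$ to rule out the nontrivial case.
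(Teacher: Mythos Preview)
Your proof is correct and follows essentially the same strategy as the paper: differentiate the Gaussian-curvature relation in $u$ to obtain the separated identity $g_2''(f_1'''g_2'-f_2'''g_1')=0$, then combine the nonzero-torsion hypothesis (4.2) with the unit-speed constraint on $\beta$ to force $g_2''\equiv 0$. The only organizational difference is that the paper splits into the cases $K_0=0$ and $K_0\neq 0$ and argues more tersely that (4.5) forces $g_2'=cg_1'$, whereas you give a unified argument and make the linear-independence step explicit; the underlying mechanism is identical.
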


\begin{proof} If $K_{0}$ vanishes then either $g_{2}^{\prime \prime }=0$
or $f_{1}^{\prime \prime }g_{2}^{\prime }-f_{2}^{\prime \prime
}g_{1}^{\prime }=0$ in (4.4). The second possibility is eliminated due to (4.2) and
thus $\beta $ becomes an isotropic line. Otherwise, $K_{0}\neq 0$, we have $%
g_{2}^{\prime \prime }\neq 0.$ Then by taking partial derivative of (4.4)
with respect to $u$, we get%
\begin{equation}
0=f_{1}^{\prime \prime \prime }g_{1}^{\prime }-f_{2}^{\prime \prime \prime
}g_{2}^{\prime }.  \tag{4.5}
\end{equation}%
From (4.2) at least one of $f_{1}^{\prime \prime \prime }$ and $%
f_{2}^{\prime \prime \prime }$ is different from zero. Thus (4.5) implies $%
g_{2}^{\prime }=cg_{1}^{\prime },$ $c\in \mathbb{R}-\{0\}.$ Considering it
into (4.1) yields a contradiction, which proves the theorem.
\end{proof} 

\begin{theorem}  If the surface given by (4.1) has constant mean
curvature $H_{0}$ in $\mathbb{G}_{3}$\ then either 

\begin{enumerate}
\item[(1)] it is either a generalized cylinder with isotropic
rulings ($H_{0}=0$); or 

\item[(2)] the translating isotropic curve is a Euclidean circle
with radius $\frac{1}{\left\vert H_{0}\right\vert }$\ ($H_{0}\neq 0$).
\end{enumerate}
\end{theorem}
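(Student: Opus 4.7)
The plan is to compute the mean curvature $H$ of the type~3 surface (4.3) directly from the definitions in Section~1, observe that it depends only on $\beta$, and then classify unit-speed plane curves with constant Euclidean curvature. First I would collect the first-order data: from $r_{u}=(1,f_{1}^{\prime},f_{2}^{\prime})$ and $r_{v}=(0,g_{1}^{\prime},g_{2}^{\prime})$ one reads off $g_{1}=1,\ g_{2}=0$, and the unit-speed relation $(g_{1}^{\prime})^{2}+(g_{2}^{\prime})^{2}=1$ forces $W=1$ together with the normal $N=(0,-g_{2}^{\prime},g_{1}^{\prime})$.

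Next I would compute the second fundamental form. Two features of the parametrization simplify matters: $r_{uv}=0$ (by the additive separation), which forces $L_{12}=0$, and $x=u$ is linear, so every second partial $x_{,ij}$ vanishes, killing the correction term $g_{i,j}(0,y_{,1},z_{,1})\cdot N$ in the definition of $L_{ij}$. What survives is $L_{22}=g_{2}^{\prime\prime}g_{1}^{\prime}-g_{1}^{\prime\prime}g_{2}^{\prime}$, which collapses to $g_{2}^{\prime\prime}/g_{1}^{\prime}$ via identity (4.1); this is where the standing assumption $g_{1}^{\prime}\neq 0$ enters. Substituting into the $H$-formula and using $g_{2}=0$, only the $g_{1}^{2}L_{22}$ term survives, so $H$ is a constant multiple of $g_{2}^{\prime\prime}/g_{1}^{\prime}$. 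Crucially this coincides (up to a fixed constant) with the Euclidean curvature of the plane curve $v\mapsto(g_{1}(v),g_{2}(v))$ in the $(y,z)$-plane, and hence depends only on $\beta$.

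The remaining step is the ODE analysis under $H\equiv H_{0}$. If $H_{0}=0$, then $g_{2}^{\prime\prime}=0$, and the unit-speed constraint forces both $g_{1}$ and $g_{2}$ to be affine in $v$; consequently $\beta$ is an isotropic line and (4.3) reduces to a generalized cylinder with isotropic rulings, giving statement~(1). If $H_{0}\neq 0$, the condition on $H$ becomes a linear relation between $g_{2}^{\prime\prime}$ and $g_{1}^{\prime}$; differentiating (4.1) yields the companion relation between $g_{1}^{\prime\prime}$ and $g_{2}^{\prime}$, and one more differentiation turns these into a harmonic oscillator system for $(g_{1}^{\prime},g_{2}^{\prime})$. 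Integrating and re-imposing $(g_{1}^{\prime})^{2}+(g_{2}^{\prime})^{2}=1$ forces $\beta$ to parametrize a Euclidean circle in the plane $x=0$ whose radius is determined by $H_{0}$, yielding statement~(2). The main obstacle is the second-fundamental-form computation: tracking the correction terms carefully and exploiting (4.1) to compress $L_{22}$ into a quantity depending only on $\beta$ is where attention is required; once that reduction is in hand, everything downstream is elementary ODE theory.
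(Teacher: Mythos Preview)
Your proposal is correct and follows essentially the same route as the paper: both reduce to the identity $H=\dfrac{g_{2}^{\prime\prime}}{g_{1}^{\prime}}$ (equation~(4.6)), then in the case $H_{0}\neq 0$ combine this with (4.1) to obtain the companion relation $g_{1}^{\prime\prime}=-H_{0}g_{2}^{\prime}$ and pass to the harmonic oscillator system $g_{i}^{\prime\prime\prime}+H_{0}^{2}g_{i}^{\prime}=0$, whose unit-speed solutions are circles of radius $1/\lvert H_{0}\rvert$. Two small remarks: your phrase ``differentiating (4.1)'' should read ``substituting into (4.1)'' (no further differentiation is needed to get the companion relation), and to match the stated radius $1/\lvert H_{0}\rvert$ you will eventually have to pin down the constant in ``$H$ is a constant multiple of $g_{2}^{\prime\prime}/g_{1}^{\prime}$'' rather than leave it unspecified.
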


\begin{proof} Assume that the surface given by (4.1) has constant mean
curvature $H_{0}$. Then we have the relation 
\begin{equation}
H_{0}=\frac{g_{2}^{\prime \prime }}{g_{1}^{\prime }},  \tag{4.6}
\end{equation}%
which immediately implies that $H_{0}$ vanishes provided $\beta $ is an
isotropic line. If $H_{0}\neq 0,$ then we have%
\begin{equation}
g_{2}^{\prime \prime }=H_{0}g_{1}^{\prime }.  \tag{4.7}
\end{equation}%
Considering (4.7) into (4.1) gives 
\begin{equation}
g_{1}^{\prime \prime }=-H_{0}g_{2}^{\prime }.  \tag{4.8}
\end{equation}%
We may formulate the equations (4.7) and (4.8) as follows: 
\begin{equation}
\left\{ 
\begin{array}{c}
g_{1}^{\prime \prime \prime }+H_{0}^{2}g_{1}^{\prime }=0, \\ 
g_{2}^{\prime \prime \prime }+H_{0}^{2}g_{2}^{\prime }=0.%
\end{array}%
\right.  \tag{4.9}
\end{equation}%
After solving (4.9) we obtain, up to suitable constants,%
\begin{equation*}
\left\{ 
\begin{array}{c}
g_{1}=\frac{c_{1}}{\left\vert H_{0}\right\vert }\sin (\left\vert
H_{0}\right\vert u)+\frac{c_{2}}{\left\vert H_{0}\right\vert }\cos
(\left\vert H_{0}\right\vert u), \\ 
g_{2}=\frac{c_{3}}{\left\vert H_{0}\right\vert }\sin (\left\vert
H_{0}\right\vert v)+\frac{c_{4}}{\left\vert H_{0}\right\vert }\cos
(\left\vert H_{0}\right\vert v).%
\end{array}%
\right.
\end{equation*}%
Since $(g_{1}^{\prime })^{2}+(g_{2}^{\prime })^{2}=1,$ we have $%
(c_{1})^{2}+(c_{3})^{2}=1,$ $(c_{2})^{2}+(c_{4})^{2}=1$ and $%
c_{1}c_{2}+c_{3}c_{4}=0.$ This means that $\beta $ is a Euclidean circle
with radius $\frac{1}{\left\vert H_{0}\right\vert }.$
\end{proof} 

\section{Constant Curvature Surfaces of Type 4}

In last section, we are interested in the surfaces generated by
translating a space curve $\alpha=\alpha (u)=(u,f_{1}(u),f_{2}(u))$ and a planar non-isotropic curve $\beta =\beta (v)=(v,g(v),av),\text{ }a\in \mathbb{R}.$
Since the torsion of $\alpha $ is different from zero, we have%
\begin{equation}
f_{1}^{\prime \prime }f_{2}^{\prime \prime \prime }-f_{1}^{\prime \prime
\prime }f_{2}^{\prime \prime }\neq 0,  \tag{5.1}
\end{equation}%
where $\frac{df_{i}}{du}=f_{i}^{\prime }$ and so on, $i=1,2$. Therefore the obtained
translation surface is of the form%
\begin{equation}
r(u,v)=(u+v,f_{1}(u)+g(v),f_{2}(u)+av).  \tag{5.2}
\end{equation}%
By a calculation, the Gaussian curvature turns to%
\begin{equation}
K=\frac{g^{\prime \prime }\left[ f_{1}^{\prime \prime }\left( f_{2}^{\prime
}-a\right) ^{2}-f_{2}^{\prime \prime }(f_{2}^{\prime }-a)(f_{1}^{\prime
}-g^{\prime })\right] }{\left[ (f_{2}^{\prime }-a)^{2}+\left( f_{1}^{\prime
}-g^{\prime }\right) ^{2}\right] ^{2}}.  \tag{5.3}
\end{equation}

\begin{theorem}  If the surface given by (5.2) has constant Gaussian
curvature $K_{0}$\ in $\mathbb{G}_{3},$\ then it is a generalized cylinder
with non-isotropic rulings ($K_{0}=0$).
\end{theorem}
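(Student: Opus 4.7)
The plan is to dichotomize on whether $K_{0}=0$.

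Case $K_{0}=0$: The numerator in (5.3) must vanish identically, i.e.\ $g''(f_{2}'-a)[f_{1}''(f_{2}'-a)-f_{2}''(f_{1}'-g')]=0$. The factor $f_{2}'-a$ cannot vanish identically, else $f_{2}''\equiv f_{2}'''\equiv 0$, contradicting (5.1); so on an open set $V_{1}\subseteq I_{1}$ we have $f_{2}'-a\neq 0$. If $g''\not\equiv 0$, pick an open set $V_{2}\subseteq I_{2}$ on which $g''\neq 0$; then the bracket must vanish on $V_{1}\times V_{2}$. At any fixed $u_{0}\in V_{1}$, the bracket is linear in $g'(v)$, so its vanishing for a range of $v$ with $g''\neq 0$ forces $f_{2}''(u_{0})=0$, and then the remaining term $f_{1}''(u_{0})(f_{2}'(u_{0})-a)=0$ gives $f_{1}''(u_{0})=0$. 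Hence $f_{1}''=f_{2}''=0$ on $V_{1}$, which again contradicts (5.1). Therefore $g''\equiv 0$, $g$ is affine, and $r(u,v)=\alpha(u)+\beta(v)$ becomes a generalized cylinder with non-isotropic ruling direction $(1,g'(0),a)$.

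Case $K_{0}\neq 0$: Then $g''\neq 0$, $A:=f_{2}'-a\neq 0$, and (since (5.1) rules out $f_{2}''\equiv 0$) we may work on an open set $U_{1}\times U_{2}$ on which $g''$, $A$, and $f_{2}''$ are all nonzero. On $U_{2}$, $g'$ is a diffeomorphism onto an interval of $\xi$-values; writing $\xi=g'(v)$ and $\Phi(\xi)=g''(v)$, the equation $K=K_{0}$ is equivalent to
\begin{equation*}
\Phi(\xi)=\frac{K_{0}\bigl[A(u)^{2}+(f_{1}'(u)-\xi)^{2}\bigr]^{2}}{A(u)\bigl[f_{1}''(u)A(u)-f_{2}''(u)(f_{1}'(u)-\xi)\bigr]}
\end{equation*}
at every $u\in U_{1}$. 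The left-hand side is independent of $u$, so the rational function on the right must be the same rational function of $\xi$ for every choice of $u$.

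As a rational function of $\xi$, the numerator is (up to the scalar $K_{0}$) a monic polynomial of degree four whose zeros are the complex conjugate pair $f_{1}'(u)\pm iA(u)$, each doubled; the denominator is linear, with unique real pole $f_{1}'(u)-A(u)f_{1}''(u)/f_{2}''(u)$. Since the pole is real while the zeros are non-real (as $A\neq 0$), the expression is in lowest terms, so its unordered multiset of zeros is an invariant of the rational function. Demanding this invariant be $u$-independent forces $\{f_{1}'(u)+iA(u),\,f_{1}'(u)-iA(u)\}$ to be constant, and continuity on an interval then forces $f_{1}'$ and $A$ themselves to be constants on $U_{1}$. But then $f_{2}''=A'\equiv 0$ on $U_{1}$, contradicting the choice of $U_{1}$. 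This rules out $K_{0}\neq 0$ and completes the proof.

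The main obstacle is organizing the $K_{0}\neq 0$ case. The useful trick is the change of variable $\xi=g'(v)$, which converts the single mixed equation in $(u,v)$ into a rational identity in $\xi$ whose coefficients depend on $u$; matching zeros and poles then yields strong differential constraints on $\alpha$ that collide with the space-curve hypothesis (5.1). The bookkeeping around degenerate points (where $A$, $f_{2}''$, or $g''$ vanish) is handled by shrinking to the open set where they do not, using (5.1) at each step to exclude identically-degenerate subcases.
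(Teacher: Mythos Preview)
Your proof is correct. In the $K_{0}=0$ case you and the paper argue essentially the same way: the paper writes the bracket condition
\[
f_{1}''(f_{2}'-a)-f_{2}''(f_{1}'-g')=0
\]
and differentiates once in $v$ to obtain $f_{2}''g''=0$, while you reach the same conclusion by noting the expression is affine in $g'$.

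For $K_{0}\neq 0$ you take a genuinely different route. The paper proceeds computationally: it differentiates (5.3) repeatedly in $v$ (and once in $u$), introduces the auxiliary $\zeta=\dfrac{1}{g''}\Big(\dfrac{g'''}{g''}\Big)'$, and builds up the chain (5.5)--(5.8), finally splitting into two subcases according to which factor of (5.8) vanishes; each subcase is then pushed to a contradiction with (5.1). Your approach is structural: the change of variable $\xi=g'(v)$ recasts $K=K_{0}$ as the statement that a one-parameter family of explicit rational functions of $\xi$ (with coefficients depending on $u$) is in fact constant in $u$; since the double zeros $f_{1}'(u)\pm i(f_{2}'(u)-a)$ are non-real while the unique pole is real, the expression is in lowest terms and its zero set must be $u$-independent, forcing $f_{1}'$ and $f_{2}'-a$ constant and hence $f_{2}''\equiv 0$, contradicting (5.1). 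This bypasses the differentiation cascade and the ensuing case analysis; the paper's method, by contrast, is more mechanical and does not rely on recognizing the rational structure in $g'$.
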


\begin{proof} We divide the proof into two cases:

\begin{itemize}
\item[\textbf{Case (A)}] $K_{0}=0$. From (5.3), we conclude either $g^{\prime
\prime }=0$, namely the surface is generalized cylinder with non-isotropic
rulings, or 
\begin{equation}
f_{1}^{\prime \prime }(f_{2}^{\prime }-a)-f_{2}^{\prime \prime
}(f_{1}^{\prime }-g^{\prime })=0. \tag{5.4}
\end{equation}%
Taking partial derivative of (5.4) with respect to $v$, we get $%
f_{2}^{\prime \prime }=0$, which is not possible due to (5.1).

\item[\textbf{Case (B)}] $K_{0}\neq 0$. By taking twice partial derivative
of (5.3) with respect to $v$, we deduce%
\begin{equation}
\left. 
\begin{array}{c}
-4K_{0}\left[ 3\left( f_{1}^{\prime }-g^{\prime }\right) ^{2}+\left(
f_{2}^{\prime }-a\right) ^{2}\right] = \\ \frac{1}{g^{\prime \prime }}\left( 
\frac{g^{\prime \prime \prime }}{g^{\prime \prime }}\right) ^{\prime }\left[
f_{1}^{\prime \prime }\left( f_{2}^{\prime }-a\right) ^{2}-f_{2}^{\prime
\prime }(f_{2}^{\prime }-a)(f_{1}^{\prime }-g^{\prime })\right] 
+2\frac{g^{\prime \prime \prime }}{g^{\prime \prime }}(f_{2}^{\prime
}-a)f_{2}^{\prime \prime }%
\end{array}%
\right.  \tag{5.5}
\end{equation}%
where $g^{\prime \prime }\neq 0$ due to our assumption. Put $\zeta =\frac{1}{%
g^{\prime \prime }}(\frac{g^{\prime \prime \prime }}{g^{\prime \prime }}%
)^{\prime }$ into (5.5). After taking partial derivative of (5.5) with respect to $%
v$, we conclude%
\begin{equation}
24K_{0}(f_{1}^{\prime }-g^{\prime })=\frac{\zeta ^{\prime }}{g^{\prime
\prime }}\left[ f_{1}^{\prime \prime }\left( f_{2}^{\prime }-a\right)
^{2}-f_{2}^{\prime \prime }(f_{2}^{\prime }-a)(f_{1}^{\prime }-g^{\prime })%
\right] +3\zeta (f_{2}^{\prime }-a)f_{2}^{\prime \prime },  \tag{5.6}
\end{equation}%
where $\zeta ^{\prime }=\frac{d\zeta }{dv}.$ The partial derivative of (5.6)
with respect to $v$ implies%
\begin{equation}
-\frac{24}{f_{2}^{\prime \prime }(f_{2}^{\prime }-a)}=\frac{1}{g^{\prime
\prime }}\left( \frac{\zeta ^{\prime }}{g^{\prime \prime }}\right) ^{\prime }%
\left[ \frac{f_{1}^{\prime \prime }(f_{2}^{\prime }-a)}{f_{2}^{\prime \prime
}}-(f_{1}^{\prime }-g^{\prime })\right] +3\frac{\zeta ^{\prime }}{g^{\prime
\prime }}.  \tag{5.7}
\end{equation}%
After again taking partial derivative of (5.7) with respect to $u$ and $v$,
we deduce%
\begin{equation}
0=\left( \frac{1}{g^{\prime \prime }}\left( \frac{\zeta ^{\prime }}{%
g^{\prime \prime }}\right) ^{\prime }\right) ^{\prime }\left[ \left( \frac{%
f_{1}^{\prime \prime }(f_{2}^{\prime }-a)}{f_{2}^{\prime \prime }}\right)
^{\prime }-f_{1}^{\prime \prime }\right] .  \tag{5.8}
\end{equation}%
We have two cases to solve (5.8):

\begin{itemize}
\item[\textbf{Case (B.i)}] $\left( \frac{\zeta ^{\prime }}{g^{\prime \prime }%
}\right) ^{\prime }=c_{3}g^{\prime \prime }$. Up to suitable constant, we
have $\frac{\zeta ^{\prime }}{g^{\prime \prime }}=c_{3}g^{\prime }$.
Substituting these into (5.7) gives%
\begin{equation*}
-\frac{24}{f_{2}^{\prime \prime }(f_{2}^{\prime }-a)}=c_{3}\frac{%
f_{1}^{\prime \prime }(f_{2}^{\prime }-a)}{f_{2}^{\prime \prime }}%
-c_{3}f_{1}^{\prime }+4c_{3}g^{\prime },
\end{equation*}%
which implies $c_{3}=0$ and thus $\zeta ^{\prime }=0.$ Considering it into (5.6)
leads to%
\begin{equation}
24K_{0}(f_{1}^{\prime }-g^{\prime })=3c_{4}(f_{2}^{\prime }-a)f_{2}^{\prime
\prime },  \tag{5.9}
\end{equation}%
where $\zeta =c_{4}.$ (5.9) yields a contradiction due to $K_{0}\neq 0.$

\item[\textbf{Case (B.ii)}] $\left( \frac{f_{1}^{\prime \prime
}(f_{2}^{\prime }-a)}{f_{2}^{\prime \prime }}\right) ^{\prime
}-f_{1}^{\prime \prime }=0$. Up to suitable constant, we have%
\begin{equation}
\frac{f_{1}^{\prime \prime }}{f_{1}^{\prime }}=\frac{f_{2}^{\prime \prime }}{%
f_{2}^{\prime }-a}.  \tag{5.10}
\end{equation}%
After solving (5.10) we obtain $f_{1}^{\prime \prime }=c_{5}f_{2}^{\prime
\prime }$ which is a contradiction due to (5.1). Therefore the proof is
completed.
\end{itemize}
\end{itemize}
\end{proof}

By a calculation, the mean curvature turns to%
\begin{equation}
H=\frac{\left( f_{2}^{\prime }-a\right) g^{\prime \prime }+\left(
f_{2}^{\prime }-a\right) f_{1}^{\prime \prime }-\left( f_{1}^{\prime
}-g^{\prime }\right) f_{2}^{\prime \prime }}{\left[ \left( f_{2}^{\prime
}-a\right) ^{2}+\left( f_{1}^{\prime }-g^{\prime }\right) ^{2}\right] ^{%
\frac{3}{2}}}.  \tag{5.11}
\end{equation}%
First we investigate the minimality case:

\begin{theorem} There does not exist a minimal translation surface
given by (5.2) in $\mathbb{G}_{3}$.
\end{theorem}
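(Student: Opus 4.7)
The plan is a proof by contradiction. Setting $H\equiv 0$ in (5.11), the numerator must vanish identically:
\begin{equation*}
(f_2'-a)(f_1''+g'') \;=\; (f_1'-g')\,f_2'' \qquad (\star).
\end{equation*}
Since $f_1,f_2$ depend only on $u$ and $g$ only on $v$, differentiating $(\star)$ with respect to $v$ kills the undifferentiated $u$-terms and leaves the clean identity
\begin{equation*}
(f_2'-a)\,g''' \;=\; -g''\,f_2'' \qquad (\star\star).
\end{equation*}
The rest of the proof reads off constraints from $(\star)$ and $(\star\star)$ by separation of variables and shows that every possibility forces $f_1''f_2'''-f_1'''f_2''\equiv 0$, contradicting the torsion hypothesis (5.1).

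First I would dispose of the degenerate cases. If $f_2''\equiv 0$ on an open interval, then $f_2'''=0$ there, so (5.1) gives $0\neq 0$, absurd. Hence one may restrict to an open subinterval on which $f_2''\neq 0$, and (by monotonicity of $f_2'$) also $f_2'\neq a$. Next, if $g''\equiv 0$, then $g'=c$ is constant and $(\star)$ becomes $(f_2'-a)f_1''=(f_1'-c)f_2''$. After ruling out the trivial branch $f_1'\equiv c$ (which immediately violates (5.1)), separation of variables gives $f_1'-c=\lambda(f_2'-a)$ for some constant $\lambda$. Differentiating twice yields $f_1''=\lambda f_2''$ and $f_1'''=\lambda f_2'''$, so once more $f_1''f_2'''-f_1'''f_2''=0$, contradicting (5.1).

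The substantive case is $g''\not\equiv 0$ and $f_2''\not\equiv 0$. Rearranging $(\star\star)$,
\begin{equation*}
\frac{g'''(v)}{g''(v)} \;=\; -\,\frac{f_2''(u)}{f_2'(u)-a},
\end{equation*}
so both sides equal a common constant $k$. The value $k=0$ reduces $(\star\star)$ to $g''f_2''=0$, contradicting both nonvanishing assumptions; hence $k\neq 0$. Integrating the two separated ODEs gives $g''(v)=c_1 e^{kv}$ and $f_2'(u)-a=c_2 e^{-ku}$ with $c_1,c_2\neq 0$, from which $f_2''=-kc_2 e^{-ku}$ and $f_2'''=k^2 c_2 e^{-ku}$. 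Substituting these back into $(\star)$ and using the exponential factorization, the equation reduces to $f_1''+kf_1'$ equalling a function of $v$ alone; this forces it to be a constant, and integrating produces $f_1'(u)=c_3+c_4 e^{-ku}$. A direct computation then yields $f_1''f_2'''-f_1'''f_2''=0$, the final contradiction with (5.1).

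The main obstacle is not any single deep step but the bookkeeping required to organize the degenerate subcases ($f_2''=0$, $g''=0$, and $f_2'=a$); the content of the argument is that $(\star\star)$ forces $f_2'-a$ and $g''$ to be exponentials with reciprocal rates, and that this exponential rigidity propagates through $(\star)$ to make $f_1''$ and $f_2''$ proportional exponentials, which is precisely the obstruction preventing $\alpha$ from being a genuine space curve.
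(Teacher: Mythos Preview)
Your proof is correct and follows essentially the same route as the paper's: derive $(\star)$ from $H=0$, differentiate in $v$ to obtain the separated relation $(\star\star)$, treat the case $g''\equiv 0$ by showing $f_1''=\lambda f_2''$, and in the main case use separation of variables to force both $f_1$ and $f_2$ to satisfy first-order linear ODEs with the same rate, whence the torsion expression in (5.1) vanishes. The only cosmetic difference is that you integrate $(\star\star)$ to explicit exponentials $g''=c_1e^{kv}$, $f_2'-a=c_2e^{-ku}$ before substituting back, whereas the paper keeps the relations in the form $f_2'''=-c_3 f_2''$, $f_1'''=-c_3 f_1''$; both reach the identical contradiction $f_1''f_2'''-f_1'''f_2''=0$.
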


\begin{proof} Let us assume the contrary situation. Then (5.11) reduces to%
\begin{equation}
\left( f_{2}^{\prime }-a\right) \left( g^{\prime \prime }+f_{1}^{\prime
\prime }\right) -\left( f_{1}^{\prime }-g^{\prime }\right) f_{2}^{\prime
\prime }=0.  \tag{5.12}
\end{equation}%
The partial derivative of (5.12) with respect to $v$ yields%
\begin{equation}
\left( f_{2}^{\prime }-a\right) g^{\prime \prime \prime }+f_{2}^{\prime
\prime }g^{\prime \prime }=0.  \tag{5.13}
\end{equation}%
We have two cases:

\begin{itemize}
\item[\textbf{Case (A)}] $g^{\prime }=c_{1},$ $c_{1}\in \mathbb{R}.$ Then
(5.13) turns to%
\begin{equation}
\frac{f_{1}^{\prime \prime }}{f_{1}^{\prime }-c_{1}}=\frac{f_{2}^{\prime
\prime }}{f_{2}^{\prime }-a}  \tag{5.14}
\end{equation}%
and solving (5.14) yields $f_{1}^{\prime \prime }=c_{2}f_{2}^{\prime \prime
},$ $c_{2}\in \mathbb{R-}\left\{ 0\right\} .$ This leads to a contradiction
due to (5.1).

\item[\textbf{Case (B)}] $g^{\prime \prime }\neq 0.$ Then (5.13) can be
rewritten as%
\begin{equation}
\frac{g^{\prime \prime \prime }}{g^{\prime \prime }}=c_{3}=\frac{%
-f_{2}^{\prime \prime }}{f_{2}^{\prime }-a},\text{ }c_{3}\in \mathbb{R-}%
\left\{ 0\right\} .  \tag{5.15}
\end{equation}%
which implies $g^{\prime \prime }=c_{3}g^{\prime },$ up to suitable
constant. Substituting these into (5.12) gives%
\begin{equation}
f_{1}^{\prime \prime }+c_{3}f_{1}^{\prime }=0.  \tag{5.16}
\end{equation}%
From (5.15) and (5.16) we derive%
\begin{equation*}
f_{2}^{\prime \prime \prime }=-c_{3}f_{2}^{\prime \prime }\text{ \ and \ }%
f_{1}^{\prime \prime \prime }=-c_{3}f_{1}^{\prime \prime },
\end{equation*}%
which is no possible due to (5.1). Therefore the proof is completed.
\end{itemize}
\end{proof}

\begin{theorem} If the surface given by (5.2) has nonzero constant
mean curvature $H_{0}$\ in $\mathbb{G}_{3}$, then it is a generalized
cylinder with non-isotropic rulings whose the base curve satisfies the
equation%
\begin{equation*}
f_{1}=cu+H_{0}^{2}\left\{ \frac{1}{2}\left( f_{2}-au\right) ^{2}\zeta
(\sigma )-\frac{1}{2}\int \left[ \left( f_{2}-au\right) ^{2}\frac{d\zeta
(\sigma )}{du}\right] du\right\} ,
\end{equation*}%
where $c\in R$\ and%
\begin{equation*}
\zeta (\sigma )=\int \left( f_{2}-au\right) d\sigma \text{ \ }for\text{ \ }%
\sigma =\frac{f_{1}^{\prime }-c_{1}}{f_{2}^{\prime }-a}.
\end{equation*}%
\end{theorem}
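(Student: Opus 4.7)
My plan is two-fold: (i) force $g''\equiv 0$ via successive differentiation of the mean-curvature equation, so the surface is a cylinder whose rulings lie along the non-isotropic direction $\beta'$; (ii) integrate the resulting ODE in $u$ to recover the stated implicit formula for $f_1$.

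Clearing the denominator of (5.11) with $H=H_0$ gives
\[H_0\bigl[(f_2'-a)^2+(f_1'-g')^2\bigr]^{3/2}=(f_2'-a)(g''+f_1'')-(f_1'-g')f_2''.\]
Differentiating with respect to $v$ yields
\[-3H_0g''(f_1'-g')\sqrt{(f_2'-a)^2+(f_1'-g')^2}=(f_2'-a)g'''+g''f_2'',\]
call this $(\star)$. If $g''\equiv 0$ we are done with stage (i); otherwise, I would square $(\star)$, divide by $(g'')^2$, and then alternate $\partial/\partial u$ and $\partial/\partial v$ to separate $u$-only from $v$-only factors. In the spirit of Case (B) of Theorem 5.2 and Case (B) of Theorem 5.1, this reduction should terminate in a polynomial identity in $(f_1'-g')$, $(f_2'-a)$, $f_1''$, $f_2''$ whose only consistent solution forces a linear dependence $f_1''=\mathrm{const}\cdot f_2''$, contradicting the space-curve condition (5.1). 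Executing this elimination cleanly is the main obstacle, as the square root complicates the bookkeeping and the degree of the resulting polynomial identity grows with each differentiation.

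With $g''\equiv 0$ in hand, set $g(v)=c_1 v$ up to a harmless translation, so the rulings $\beta'(v)=(1,c_1,a)$ are non-isotropic and the surface is indeed a cylinder of the stated kind. The mean-curvature equation now reads
\[H_0\bigl[(f_2'-a)^2+(f_1'-c_1)^2\bigr]^{3/2}=(f_2'-a)f_1''-(f_1'-c_1)f_2''.\]
Setting $\sigma=(f_1'-c_1)/(f_2'-a)$, the right-hand side equals $(f_2'-a)^2\sigma'$ while the left equals $H_0|f_2'-a|^3(1+\sigma^2)^{3/2}$, yielding the first-order ODE $\sigma'=H_0(f_2'-a)(1+\sigma^2)^{3/2}$ (up to a sign) that couples $\sigma$ with $f_2-au$.

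To cast the answer into the integral form asserted in the theorem, I would recover $f_1$ from $f_1'=c_1+(f_2'-a)\sigma$ and set $\zeta(\sigma)=\int(f_2-au)\,d\sigma=\int(f_2-au)\sigma'\,du$. An integration by parts using $(f_2-au)'=f_2'-a$ handles $\int(f_2'-a)\sigma\,du$; then, using the ODE to trade powers of $(f_2'-a)$ for $H_0^{-1}\sigma'(1+\sigma^2)^{-3/2}$ (which is where the factor $H_0^2$ enters), a second integration by parts with $A=\frac{1}{2}(f_2-au)^2$ and $B=\zeta(\sigma)$ produces
\[f_1=cu+H_0^2\!\left\{\frac{1}{2}(f_2-au)^2\zeta(\sigma)-\frac{1}{2}\!\int\!(f_2-au)^2\frac{d\zeta(\sigma)}{du}\,du\right\},\]
as claimed. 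Apart from routine bookkeeping of constants, no further work is needed.
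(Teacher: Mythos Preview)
Your overall two-stage plan matches the paper's, but both stages are left as sketches and each misses the key manoeuvre that makes the argument go through.

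\textbf{Stage (i).} You flag the elimination after $(\star)$ as ``the main obstacle'' and propose to square first. The paper avoids squaring entirely: dividing $(\star)$ by $g''$ gives
\[
3H_{0}\bigl[(f_{2}'-a)^{2}+(f_{1}'-g')^{2}\bigr]^{1/2}(f_{1}'-g')=(f_{2}'-a)\,\frac{g'''}{g''}+f_{2}'' ,
\]
whose right-hand side is \emph{linear} in $g'''/g''$. One further $\partial/\partial v$ then produces a function of $v$ alone on the right (namely $-\frac{1}{3H_{0}g''}\bigl(\frac{g'''}{g''}\bigr)'$), so a single $\partial/\partial u$ kills it and yields a polynomial identity in $(f_{1}'-g')$. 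The coefficient of the top-degree term is $(f_{2}'-a)f_{2}''$, which cannot vanish by (5.1); contradiction. Your squaring route may eventually work, but it doubles the degree and does not isolate a $v$-only side after one differentiation, so the ``alternate $\partial_u,\partial_v$'' you describe is not a substitute for the linear-in-$g'''/g''$ trick.

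\textbf{Stage (ii).} You correctly reach the ODE $\sigma'=H_{0}(f_{2}'-a)(1+\sigma^{2})^{3/2}$, but your account of how the factor $H_{0}^{2}$ and the function $\zeta$ enter is hand-wavy; the integration by parts you describe on $\int(f_{2}'-a)\sigma\,du$ does not by itself produce the claimed expression. The paper's device is to integrate the ODE \emph{twice}: once in $u$ to get $H_{0}(f_{2}-au)=\sigma/\sqrt{1+\sigma^{2}}$, and once in $\sigma$ to get $H_{0}\,\zeta(\sigma)=\sqrt{1+\sigma^{2}}$ with $\zeta(\sigma)=\int(f_{2}-au)\,d\sigma$. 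Multiplying these two identities gives $\sigma=H_{0}^{2}(f_{2}-au)\,\zeta(\sigma)$, hence $f_{1}'-c_{1}=(f_{2}'-a)\sigma=H_{0}^{2}(f_{2}-au)(f_{2}'-a)\,\zeta(\sigma)$. Now a single integration by parts with $A=\tfrac{1}{2}(f_{2}-au)^{2}$ and $B=\zeta(\sigma)$ yields exactly the stated formula. Without this double integration the $H_{0}^{2}$ prefactor and the specific $\zeta$-structure do not emerge.
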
 

\begin{proof} The partial derivative of (5.11) with respect to $v$ gives%
\begin{equation}
3H_{0}\left[ \left( f_{2}^{\prime }-a\right) ^{2}+\left( f_{1}^{\prime
}-g^{\prime }\right) ^{2}\right] ^{\frac{1}{2}}\left( f_{1}^{\prime
}-g^{\prime }\right) g^{\prime \prime }=\left( f_{2}^{\prime }-a\right)
g^{\prime \prime \prime }+f_{2}^{\prime \prime }g^{\prime \prime }. 
\tag{5.17}
\end{equation}%
To solve (5.17), we have two cases:

\begin{itemize}
\item[\textbf{Case (A)}] $g^{\prime }=c_{1},$ $c_{1}\in \mathbb{R}$. (5.11)
turns to%
\begin{equation}
H_{0}=\frac{\left( f_{2}^{\prime }-a\right) f_{1}^{\prime \prime }-\left(
f_{1}^{\prime }-c_{1}\right) f_{2}^{\prime \prime }}{\left[ \left(
f_{2}^{\prime }-a\right) ^{2}+\left( f_{1}^{\prime }-c_{1}\right) ^{2}\right]
^{\frac{3}{2}}}.  \tag{5.18}
\end{equation}%
Put $\sigma =\dfrac{f_{1}^{\prime }-c_{1}}{f_{2}^{\prime }-a}$ into (5.18).
Then we get 
\begin{equation}
H_{0}\left( f_{2}^{\prime }-a\right) =\frac{\frac{d\sigma }{du}}{\left(
1+\sigma ^{2}\right) ^{\frac{3}{2}}}.  \tag{5.19}
\end{equation}%
Up to suitable constant, an integration of (5.19) with respect to $u$ gives%
\begin{equation}
H_{0}\left( f_{2}-au\right) =\frac{\sigma }{\left( 1+\sigma ^{2}\right) ^{%
\frac{1}{2}}}.  \tag{5.20}
\end{equation}%
Again an integration of (5.20) with respect to $\sigma $, we conclude%
\begin{equation}
H_{0}\int \left( f_{2}-au\right) d\sigma =\sqrt{1+\sigma ^{2}}.  \tag{5.21}
\end{equation}%
Substituting (5.21) into (5.20) yields%
\begin{equation*}
H_{0}^{2}\left( f_{2}-au\right) \int \left( f_{2}-au\right) d\sigma =\sigma ,
\end{equation*}%
or%
\begin{equation}
f_{1}^{\prime }-c_{1}=H_{0}^{2}\left( f_{2}-au\right) \left( f_{2}^{\prime
}-a\right) \zeta (\sigma ),  \tag{5.22}
\end{equation}%
where $\zeta (\sigma )=\int \left( f_{2}-au\right) d\sigma $. The partial
integration of (5.22) with respect to $u$ gives%
\begin{equation*}
f_{1}=c_{1}u+H_{0}^{2}\left\{ \frac{1}{2}\left( f_{2}-au\right) ^{2}\zeta
(\sigma )-\frac{1}{2}\int \left\{ \left( f_{2}-au\right) ^{2}\frac{d\zeta
(\sigma )}{du}\right\} du\right\} .
\end{equation*}

\item[\textbf{Case (B)}] $g^{\prime \prime }\neq 0$. (5.17) can be rewritten
as%
\begin{equation}
3H_{0}\left[ \left( f_{2}^{\prime }-a\right) ^{2}+\left( f_{1}^{\prime
}-g^{\prime }\right) ^{2}\right] ^{\frac{1}{2}}\left( f_{1}^{\prime
}-g^{\prime }\right) =\left( f_{2}^{\prime }-a\right) \frac{g^{\prime \prime
\prime }}{g^{\prime \prime }}+f_{2}^{\prime \prime }.  \tag{5.23}
\end{equation}%
The partial derivative of (5.23) with respect to $v$ gives%
\begin{equation}
\frac{2\frac{(f_{1}^{\prime }-g^{\prime })^{2}}{f_{2}^{\prime }-a}%
+f_{2}^{\prime }-a}{\left[ \left( f_{2}^{\prime }-a\right) ^{2}+\left(
f_{1}^{\prime }-g^{\prime }\right) ^{2}\right] ^{\frac{1}{2}}}=-\frac{1}{%
3H_{0}g^{\prime \prime }}\left( \frac{g^{\prime \prime \prime }}{g^{\prime
\prime }}\right) ^{\prime }.  \tag{5.26}
\end{equation}%
By again taking partial derivative of (5.26) with respect to $u$ we derive a
polynomial equation on $\left( f_{1}^{\prime }-g^{\prime }\right) .$ In that
equation, the coefficient of the term of highest degree is $\left(
f_{2}^{\prime }-a\right) f_{2}^{\prime \prime }$. This one cannot vanish due
to (5.1) and therefore we achieve a contradiction which completes the proof.
\end{itemize}
\end{proof}

\section{Conclusions}

This study is devoted to obtain the translation surfaces in $\mathbb{%
G}_{3}$ with $K=const.$ and $H=const.$ when at least one of the translating curves is planar. In this sense, to classify
the surfaces  in $\mathbb{G}_{3}$ whose both of translating curves are
non-planar is still an open problem, that is not easy to solve. However, it is obvious that such a surface  can be neither flat nor minimal (see Theorem 1.1). Consequently, the known results can be summarized as in Table 1:

\bigskip

\end{document}